\providecommand{\U}[1]{\protect\rule{.1in}{.1in}}
\newtheorem{theorem}{Theorem}
\newtheorem{corollary}{Corollary}
\newtheorem{lemma}{Lemma}
\newtheorem{proposition}{Proposition}
\newtheorem{remark}{Remark}
\newenvironment{proof}[1][Proof]{\noindent\textbf{#1.} }{\ \rule{0.5em}{0.5em}}
\begin{document}

\title{Harmonic Geometric Polynomials via Geometric Polynomials and Their Applications}
\author{P\i nar Akkanat\thanks{akkanatpinar@gmail.com} and Levent Karg\i
n\thanks{lkargin@akdeniz.edu.tr}\\Department of Mathematics, Akdeniz University, Antalya Turkey}
\date{}
\maketitle

\begin{abstract}
The aim of this study is to show that harmonic geometric polynomials can be
represented in terms of geometric polynomials. This problem was first
considered by Keller \cite{Keller2014}; however, the corresponding
coefficients were not fully determined. In the present work, we provide
several explicit representations of harmonic geometric polynomials in terms of
geometric polynomials. Moreover, several applications of one of these
representations are subsequently developed. In particular, we obtain a
generalization of the classical identity for the harmonic numbers, compute an
integral involving harmonic geometric polynomials and an integral involving
products of harmonic geometric and geometric polynomials in terms of Bernoulli
numbers. These integral formulas lead to new explicit expressions for
Bernoulli numbers. In addition, we give several recurrence relations for
harmonic geometric polynomials and evaluate a finite sum involving harmonic
numbers and positive powers of integers.

\textbf{MSC 2010:} 11B68, 11B83, 33C47.

\textbf{Keywords:} Bernoulli numbers, Harmonic numbers, Harmonic geometric
polynomials, Geometric polynomials, Semiorthogonal polynomials.

\end{abstract}

\section{Introduction}

Geometric polynomials (GP) $w_{m}\left(  x\right)  $ defined by (\cite[p.
85]{S1924})
\begin{equation}
w_{m}\left(  x\right)  =\sum_{k=0}^{m}%
\genfrac{\{}{\}}{0pt}{}{m}{k}%
k!x^{k},\label{gp}%
\end{equation}
where $%
\genfrac{\{}{\}}{0pt}{}{m}{k}%
$ is the Stirling number of second kind \cite{GKP}. The GP make it possible to
compute the finite and infinite sums, containing positive powers of integers,
in closed form. That is
\begin{equation}
A\left(  m;x\right)  :=\sum_{n=0}^{\infty}n^{m}x^{n}=\frac{1}{1-x}w_{m}\left(
\frac{x}{1-x}\right)  ,\quad\left\vert x\right\vert <1,\label{gps}%
\end{equation}
(see \cite{B2005}) and
\begin{align}
A^{\left(  p\right)  }\left(  m;x\right)   &  :=\sum_{n=0}^{p}n^{m}x^{n}%
=\frac{1}{1-x}w_{m}\left(  \frac{x}{1-x}\right)  \nonumber\\
&  -\left(  \frac{x}{1-x}\right)  ^{p+1}\sum_{k=0}^{m}\binom{m}{k}\left(
p+1\right)  ^{m-k}w_{k}\left(  \frac{x}{1-x}\right)  .\label{gpp}%
\end{align}
(see \cite{Kargin2018}). Apart from this analytical investigation, various
aspects of these polynomials have been examined in the literature. For
example, for $x=1$ in (\ref{gp}), these polynomials are reduced to ordered
Bell (geometric) numbers, which count the number of partitions of an
$n$-element set into $k$ nonempty distinguishable subsets \cite{T1974}.
Moreover, through the generating function of the GP (\cite{B2005})%

\begin{equation}
\sum_{n=0}^{\infty}w_{n}\left(  x\right)  \frac{t^{n}}{n!}=\frac{1}{1-x\left(
e^{t}-1\right)  } \label{guf}%
\end{equation}
the close connections between these polynomials and the numbers which play a
significant role in number theory, have been investigated and established
\cite{Keller2014,Kargin2017,Kargin2018}. It is well-known that $w_{n}\left(
-1/2\right)  =G_{n+1}/\left(  n+1\right)  $ and%
\begin{equation}%
{\displaystyle\int\limits_{0}^{1}}
{w}_{n}(-y)dy=B_{n},~n\geq1, \label{gp-b}%
\end{equation}
where $G_{n}$ and $B_{n}$ are the Genocchi and Bernoulli numbers, respectively
(\cite{Keller2014}). This is not the only relationship between GP and
Bernoulli numbers, there is also the following integral representation
\cite{KE2022}:\
\[%
{\displaystyle\int\limits_{0}^{1}}
\frac{1-x}{x}w_{n}\left(  -x\right)  w_{m}\left(  -x\right)  dx=\left(
-1\right)  ^{n+1}B_{n+m},\text{ }n+m\geq1,
\]
By means of this expression it is shown that the geometric polynomials are
semi-orthogonal with respect to the weight function $\mu\left(  x\right)
=\left(  1-x\right)  /x$. For a deeper exploration of the contributions of
geometric polynomials to number theory, the recent papers
\cite{AN2023,B2023,BD2019,BD2021,DK2011,MT2019,KC2022,M2021,TM2024} and the
references therein may be consulted.

Let
\[
H_{n}=\sum_{k=1}^{n}\frac{1}{k}%
\]
denote the $n$-th harmonic number. In this context, Dil and Kurt introduced
the so-called \textit{harmonic geometric polynomials (HGP)}, defined by
\cite{DK2012}%
\begin{equation}
{}_{H}{w}_{n}(x)=\sum_{k=1}^{n}%
\genfrac{\{}{\}}{0pt}{}{n}{k}%
k!H_{k}x^{k}. \label{HGPT}%
\end{equation}
The first few terms are as follows:%

\[
_{H}{w}_{0}(x)=0,\text{ }_{H}{w}_{1}(x)=x,\text{ }_{H}{w}_{2}(x)=x+3x^{2}%
,\text{ }_{H}{w}_{3}(x)=x+9x^{2}+11x^{3}.
\]
These polynomials extend the classical geometric polynomials by incorporating
harmonic numbers into their structure, thereby providing new analytical tools
for studying series involving harmonic terms. As a consequence, for
$\left\vert x\right\vert <1,$ the following series evaluation is obtained
\cite{DK2012}%
\begin{equation}
_{H}A\left(  m;x\right)  =\sum_{n=0}^{\infty}H_{n}n^{m}x^{n}=\frac{1}%
{1-x}\text{ }_{H}w_{m}\left(  \frac{x}{1-x}\right)  -\frac{\ln(1-x)}{1-x}%
w_{m}\left(  \frac{x}{1-x}\right)  . \label{19}%
\end{equation}
As with geometric polynomials the HGP exhibit strong connections with several
prominent number families in number theory. In this regard, Keller established
the following relationship between HGP and Genocchi numbers \cite[Proposition
4.5]{Keller2014}%
\[
{}_{H}{w}_{n}(\frac{-1}{2})=\left\{
\begin{array}
[c]{cc}%
\frac{-\left(  n-1\right)  }{2n}G_{n} & ;\text{even }n\geq2,\\
\frac{1}{2}\sum\limits_{k=1}^{n}\binom{n}{k}\frac{G_{k}}{k}\frac{G_{n-k+1}%
}{n-k+1} & ;\text{odd }n\geq3.
\end{array}
\right.
\]
Morever, Keller showed that \cite[Theorem 1.3.]{Keller2014}%
\begin{equation}%
{\displaystyle\int\limits_{0}^{1}}
\text{ }_{H}{w}_{n}(-y)dy=\frac{-n}{2}B_{n-1}, \label{ihgp}%
\end{equation}
which yields a new explicit expresion for Bernoulli numbers involving harmonic
numbers as \cite[Proposition 2.2.]{Keller2014}%
\begin{equation}
\sum_{k=1}^{n}\left(  -1\right)  ^{k+1}%
\genfrac{\{}{\}}{0pt}{}{n}{k}%
\frac{k!H_{k}}{k+1}=\frac{n}{2}B_{n-1}. \label{hg-b}%
\end{equation}
See also \cite{C2022,CD2014,KC2020,KCDC2022} for explicit expresions for
Bernoulli numbers involving harmonic numbers. Another topic Keller addressed
in his work was the expression of the HGP terms of GP; however, he did not
reach a definite conclusion and gave the following expressions:%
\begin{equation}
{}_{H}{w}_{n}(x)=\sum_{k=1}^{n}\left(  -1\right)  ^{k+1}\frac{x^{k}}{kk!}%
\frac{d^{k}}{dx^{k}}{w}_{n}(x) \label{14}%
\end{equation}
and%
\[
{}_{H}{w}_{n}(x)=\sum_{k=1}^{n}\lambda_{n,k}\left(  x\right)  {w}_{k}(x)
\]
where
\[
\lambda_{n,k}\left(  x\right)  =\left\{
\begin{array}
[c]{cc}%
1, & \text{if }n=k=1\\
0, & \text{if }k\notin\left\{  1,\ldots,n\right\}
\end{array}
\right.
\]
otherwise recursively defined by%
\[
\lambda_{n+1,k}\left(  x\right)  =\left(  x^{2}+x\right)  \frac{d}{dx}%
\lambda_{n,k}\left(  x\right)  +\lambda_{n,k-1}\left(  x\right)
+x\delta_{n,k}.
\]

In this study, our primary aim is to express the HGP in terms of the GP. In
this direction, we obtain three different representations of HGP written in
terms of GP (see Theorem \ref{teo5}, Theorem \ref{teo4} and Proposition
\ref{pro2}). One of these representations is derived by making use of the
identity given in (\ref{14}). The $k$ times derivative of GP is expressed
again in terms of GP, and in this way, a problem addressed in the paper of
Boyadzhiev and Dil \cite{BD2016} is also solved (see Equation (\ref{15})). In
addition, several applications of Theorem \ref{teo4} are investigated. First,
a more general form of the well-known identity
\begin{equation}
\sum_{k=1}^{n}%
\genfrac{[}{]}{0pt}{}{n}{k}%
k=n!H_{n}.\label{16}%
\end{equation}
is obtained (see Theorem \ref{teo6}). Second, an integral involving HGP and an
integral involving the product of the HGP and GP--generalizing the integral
given in (\ref{ihgp})-- are computed in terms of Bernoulli numbers (see
Theorem \ref{teo3}). These integral representations lead to new explicit
formulas for Bernoulli numbers (see Corollary \ref{cor1} and Theorem
\ref{teo8}). Furthermore, it is shown that sums involving products of HGP and
GP can be expressed again in terms of themselves, yielding new recurrence
relations for HGP. Finally, we address a natural question regarding the finite
form of the series $_{H}A\left(  m;x\right)  $. Then for $\left\vert
x\right\vert <1,$ we evaluate the sum
\[
_{H}A^{\left(  p\right)  }\left(  m;x\right)  :=\sum_{n=1}^{p}H_{n}n^{m}x^{n},
\]
in terms of HGP and GP in Theorem \ref{teo2}. 

\section{Harmonic Geometric Polynomials via Geometric Polynomials}

In this section, two representations of HGP in terms of GP are presented.
First one is as follows:

\begin{theorem}
\label{teo5}For positive integer $n,$ we have%
\[
{}_{H}{w}_{n}(x)=\sum_{l=0}^{n}\sum_{k=1}^{n}\sum_{j=0}^{k}\frac{\left(
-1\right)  ^{k+1}}{k}\binom{n}{l}%
\genfrac{\{}{\}}{0pt}{}{n-l}{k}%
\genfrac{[}{]}{0pt}{}{k+1}{j+1}%
{w}_{l+j}(x)\left(  \frac{x}{1+x}\right)  ^{k}.
\]

\end{theorem}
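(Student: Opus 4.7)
The plan is to derive the stated identity by combining Keller's expression (\ref{14}) with an auxiliary formula that represents $\frac{d^{k}}{dx^{k}}w_{n}(x)$ in terms of the geometric polynomials themselves. Specifically, I would first prove the intermediate identity
\[
\frac{x^{k}}{k!}\frac{d^{k}}{dx^{k}}w_{n}(x)=\left(\frac{x}{1+x}\right)^{k}\sum_{l=0}^{n}\binom{n}{l}\genfrac{\{}{\}}{0pt}{}{n-l}{k}\sum_{j=0}^{k}\genfrac{[}{]}{0pt}{}{k+1}{j+1}w_{l+j}(x),
\]
so that after multiplying by $(-1)^{k+1}/k$ and summing $k=1,\ldots,n$, (\ref{14}) reproduces Theorem \ref{teo5} at once; this also settles the outstanding equation (\ref{15}) advertised in the introduction.

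To establish the intermediate identity I would work at the level of exponential generating functions in $t$. Writing $F(x,t)=1/(1-x(e^{t}-1))=\sum_{n}w_{n}(x)t^{n}/n!$, differentiating $k$ times in $x$ gives that the left-hand side has EGF $x^{k}(e^{t}-1)^{k}F(x,t)^{k+1}$. On the right-hand side, the convolution weights $\binom{n}{l}\genfrac{\{}{\}}{0pt}{}{n-l}{k}$ together with $(e^{t}-1)^{k}/k!=\sum_{m}\genfrac{\{}{\}}{0pt}{}{m}{k}t^{m}/m!$ let me factor the EGF as
\[
\left(\frac{x}{1+x}\right)^{k}\cdot\frac{(e^{t}-1)^{k}}{k!}\cdot\sum_{l\geq 0}\biggl(\sum_{j=0}^{k}\genfrac{[}{]}{0pt}{}{k+1}{j+1}w_{l+j}(x)\biggr)\frac{t^{l}}{l!}.
\]
Using the rising-factorial identity $\sum_{j=0}^{k}\genfrac{[}{]}{0pt}{}{k+1}{j+1}z^{j}=(z+1)(z+2)\cdots(z+k)$ together with the shift identity $\sum_{l\geq 0}w_{l+j}(x)t^{l}/l!=\partial_{t}^{j}F$, the inner double sum equals $\prod_{i=1}^{k}(D+i)\,F$, where $D=\partial_{t}$.

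The crux is therefore the operator identity
\[
\prod_{i=1}^{k}(D+i)\,F \;=\; k!\,(1+x)^{k}\,F^{k+1},
\]
which I would prove by induction on $k$. The inductive step relies on two short computations: $DF=xe^{t}F^{2}$, and the algebraic simplification $xe^{t}F+1=(1+x)F$ (immediate from $F=1/(1-x(e^{t}-1))$), which together yield $(D+k)F^{k}=k(1+x)F^{k+1}$. Substituting this back into the factorization produces $x^{k}(e^{t}-1)^{k}F^{k+1}$, matching the EGF of the left-hand side and proving the intermediate identity; plugging into (\ref{14}) then gives Theorem \ref{teo5}.

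The principal obstacle, and the key insight, is spotting the operator interpretation: recognizing that the unsigned Stirling coefficients $\genfrac{[}{]}{0pt}{}{k+1}{j+1}$ encode the differential operator $\prod_{i=1}^{k}(D+i)$ acting on $F$, and then verifying that this operator collapses to the clean closed form $k!(1+x)^{k}F^{k+1}$. Once that telescoping is in hand, everything else reduces to routine EGF bookkeeping and a direct substitution into Keller's identity (\ref{14}).
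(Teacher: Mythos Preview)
Your argument is correct and follows the same overall skeleton as the paper: start from Keller's identity (\ref{14}), compute the exponential generating function of $\partial_{x}^{k}w_{n}(x)$ as $k!(e^{t}-1)^{k}F^{k+1}$, and then convert $F^{k+1}$ back into a combination of the ordinary $w_{l+j}(x)$. The difference lies in how this last conversion is carried out. The paper introduces the higher-order geometric polynomials $w_{n}^{(k+1)}(x)$ (whose EGF is $F^{k+1}$), obtains the intermediate formula (\ref{15}), and then invokes the known identity (\ref{hgp-gp}) from \cite{KC2022} to rewrite $w_{n}^{(k+1)}$ in terms of $w_{n+j}$. You bypass the higher-order polynomials entirely: by reading the Stirling-of-the-first-kind weights as the coefficients of the rising factorial, you interpret the inner sum as $\prod_{i=1}^{k}(D+i)\,F$ and then prove directly, via the one-line induction $(D+k)F^{k}=k(1+x)F^{k+1}$, that this operator product equals $k!(1+x)^{k}F^{k+1}$. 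In effect you give a self-contained re-proof of (\ref{hgp-gp}) at the generating-function level, so your route is more elementary and does not depend on the external reference; the paper's route is shorter to write because that step is outsourced. Note that what you actually establish is the combined formula of the Remark following the theorem, rather than (\ref{15}) in its stated form involving $w_{n}^{(k+1)}$, but that is precisely the ``Boyadzhiev--Dil'' formula the introduction is pointing to.
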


\begin{proof}
From (\ref{14}), writing derivatives of GP in terms of geometric polynomials
gives that HGP can be written in terms of GP. From (\ref{guf}), we have%
\begin{align*}
\sum_{n=0}^{\infty}\frac{d^{k}}{dx^{k}}w_{n}\left(  x\right)  \frac{t^{n}%
}{n!}  &  =\sum_{n=0}^{\infty}\frac{d^{k}}{dx^{k}}\left(  e^{t}-1\right)
^{n}x^{n}\\
&  =\left(  k!\right)  ^{2}\frac{\left(  e^{t}-1\right)  ^{k}}{k!}\left(
\frac{1}{1-x\left(  e^{t}-1\right)  }\right)  ^{k+1}\\
&  =\left(  k!\right)  ^{2}\left(  \sum_{m=0}^{\infty}%
\genfrac{\{}{\}}{0pt}{}{m}{k}%
\frac{t^{m}}{m!}\right)  \left(  \sum_{n=0}^{\infty}w_{n}^{\left(  k+1\right)
}\left(  x\right)  \frac{t^{n}}{n!}\right) \\
&  =\left(  k!\right)  ^{2}\sum_{n=0}^{\infty}\left(  \sum_{m=k}^{n}\binom
{n}{m}%
\genfrac{\{}{\}}{0pt}{}{m}{k}%
w_{n-m}^{\left(  k+1\right)  }\left(  x\right)  \right)  \frac{t^{n}}{n!}.
\end{align*}
Comparing the coefficients of $\frac{t^{n}}{n!}$ gives
\begin{equation}
\frac{d^{k}}{dx^{k}}w_{n}\left(  x\right)  =\left(  k!\right)  ^{2}\sum
_{m=k}^{n}\binom{n}{m}%
\genfrac{\{}{\}}{0pt}{}{m}{k}%
w_{n-m}^{\left(  k+1\right)  }\left(  x\right)  . \label{15}%
\end{equation}
Moreover using the formula \cite{KC2022}
\begin{equation}
w_{n}^{\left(  p+1\right)  }\left(  x\right)  =\frac{1}{p!\left(  1+x\right)
^{p}}\sum_{k=0}^{p}%
\genfrac{[}{]}{0pt}{}{p+1}{k+1}%
{w}_{n+k}(x) \label{hgp-gp}%
\end{equation}
we obtain the stated formula.
\end{proof}

\begin{remark}
Boyadzhiev and Dil \cite{BD2016} attempted to express the derivatives of
geometric polynomials in terms of geometric polynomials, but were unable to
obtain a general solution and instead listed several special cases. Now, with
the help of (\ref{15}) and (\ref{hgp-gp}), we obtain the following formula:
\[
\frac{d^{k}}{dx^{k}}w_{n}\left(  x\right)  =\frac{k!}{\left(  1+x\right)
^{k}}\sum_{m=k}^{n}\sum_{j=0}^{k}\binom{n}{m}%
\genfrac{\{}{\}}{0pt}{}{m}{k}%
\genfrac{[}{]}{0pt}{}{k+1}{j+1}%
{w}_{n-m+j}(x).
\]

\end{remark}

For the second representation, we first obtain the generating function of the
HGP%
\begin{equation}
\sum_{n=0}^{\infty}{}_{H}{w}_{n}(x)\frac{t^{n}}{n!}=-\frac{\ln\left(
1-x(e^{t}-1)\right)  }{1-x(e^{t}-1)}, \label{hggf}%
\end{equation}
which follows from the generating function of Striling numbers of the second kind%

\begin{equation}%
{\displaystyle\sum\limits_{n=k}^{\infty}}
\genfrac{\{}{\}}{0pt}{}{n}{k}%
\frac{t^{n}}{n!}=\frac{(e^{t}-1)^{k}}{k!} \label{2SGF}%
\end{equation}
and harmonic numbers
\begin{equation}%
{\displaystyle\sum\limits_{n=k}^{\infty}}
H_{n}t^{n}=\frac{-\ln\left(  1-t\right)  }{1-t}.\text{ } \label{hgf}%
\end{equation}

\begin{theorem}
\label{teo4}For positive integer $n,$ we have%
\begin{equation}
\text{ }_{H}w_{m}\left(  x\right)  =\left(  1+x-m\right)  w_{m-1}\left(
x\right)  +\left(  1+x\right)  \sum_{k=1}^{m-1}\binom{m}{k}w_{k-1}\left(
x\right)  w_{m-k}\left(  x\right)  \label{2}%
\end{equation}

\end{theorem}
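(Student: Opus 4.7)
The plan is to prove Theorem \ref{teo4} by exploiting the generating function \eqref{hggf} of the harmonic geometric polynomials and factoring it as a product whose coefficients are all expressible via the geometric polynomials.

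Write $F(t):=1/\bigl(1-x(e^{t}-1)\bigr)$, which is the generating function \eqref{guf} of the $w_n(x)$, and $L(t):=-\ln\bigl(1-x(e^{t}-1)\bigr)$, so that \eqref{hggf} reads $\sum_{n\ge 0}{}_H w_n(x)\,t^n/n!=F(t)L(t)$. The first step is to obtain a closed form for $L(t)$ directly in terms of $F$. A short computation gives $L'(t)=xe^{t}F(t)$, and since $xe^{t}=x+x(e^{t}-1)=(1+x)-1/F(t)$, one gets the clean identity $L'(t)=(1+x)F(t)-1$. Integrating from $0$ to $t$ (noting $L(0)=0$) yields
\[
L(t)=(1+x)\int_{0}^{t}F(s)\,ds-t=(1+x)\sum_{m\ge 1}w_{m-1}(x)\frac{t^{m}}{m!}-t.
\]

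The next step is to substitute this expansion into $F(t)L(t)$ and extract the coefficient of $t^{m}/m!$. Using the binomial convolution, the Cauchy product of $F$ with the sum above contributes $(1+x)\sum_{k=0}^{m-1}\binom{m}{k}w_{k}(x)w_{m-k-1}(x)$, while the $-t$ term contributes $-m\,w_{m-1}(x)$. After reindexing $k\mapsto m-k$ the convolution becomes $(1+x)\sum_{k=1}^{m}\binom{m}{k}w_{k-1}(x)w_{m-k}(x)$, and peeling off the $k=m$ summand (which equals $(1+x)w_{m-1}(x)$) and combining it with $-m\,w_{m-1}(x)$ produces the stated $(1+x-m)w_{m-1}(x)$ prefactor.

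I expect no serious obstacle; the only delicate points are (i) the algebraic manipulation $xe^{t}=(1+x)-1/F(t)$ that converts the derivative of $L$ into an affine function of $F$, and (ii) the bookkeeping when separating the $k=m$ term from the convolution to match the desired range $1\le k\le m-1$ and the coefficient $1+x-m$. Both are straightforward, so the generating-function route gives a short and self-contained proof.
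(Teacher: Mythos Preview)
Your proof is correct. Both you and the paper work from the generating function \eqref{hggf} and aim to factor $F(t)L(t)$ in a way that reduces to a binomial convolution of geometric polynomials, but the route to that factorisation differs. The paper expands $-\ln(1-u)=\sum_{n\ge 0}u^{n+1}/(n+1)$ with $u=x(e^{t}-1)$, invokes the Stirling-number generating function \eqref{2SGF} to handle the resulting powers $(e^{t}-1)^{n+1}$, and then applies the recurrence \eqref{2sy} to collapse the Stirling sums back into $w_j(x)$, ultimately reaching the same product $(1+x)F(t)\int_0^t F(s)\,ds$ plus a correction. Your observation that $L'(t)=xe^{t}F(t)=(1+x)F(t)-1$ short-circuits all of that combinatorial machinery: one integration gives $L(t)=(1+x)\int_0^t F-t$ directly, and the coefficient extraction is immediate. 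The paper's route has the mild advantage of making the Stirling-number origin of the identity visible, while yours is shorter, avoids any auxiliary identities beyond \eqref{guf} and \eqref{hggf}, and isolates the single algebraic step $xe^{t}=(1+x)-1/F(t)$ as the heart of the argument.
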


\begin{proof}
Specifically, by using the series expansion of the logarithm function in the
appropriate region, (\ref{hggf}) can be rewritten as follows:
\begin{align*}
\sum_{k=1}^{\infty}{}_{H}{w}_{k}(x)\frac{t^{k}}{k!}  &  =-\ln\left(
1-x(e^{t}-1)\right)  \frac{1}{1-x(e^{t}-1)}\\
&  =\sum_{n=0}^{\infty}\frac{\left(  x(e^{t}-1)\right)  ^{n+1}}{n+1}\sum
_{k=0}^{\infty}{w}_{k}(x)\frac{t^{k}}{k!}.
\end{align*}
Employing (\ref{2SGF}) in the equation above gives%
\begin{align*}
\sum_{k=1}^{\infty}{}_{H}{w}_{k}(x)\frac{t^{k}}{k!}  &  =\left(  \sum
_{k=0}^{\infty}{w}_{k}(x)\frac{t^{k}}{k!}\right)  \left(  \sum_{j=1}^{\infty
}\sum_{n=0}^{j}%
\genfrac{\{}{\}}{0pt}{}{j+1}{n+1}%
n!x^{n+1}\frac{t^{j+1}}{\left(  j+1\right)  !}\right) \\
&  +\sum_{k=0}^{\infty}x{w}_{k}(x)\frac{t^{k+1}}{k!}.
\end{align*}
With the light of \cite{GKP}\
\begin{equation}%
\genfrac{\{}{\}}{0pt}{}{j+1}{n+1}%
=%
\genfrac{\{}{\}}{0pt}{}{j}{n+1}%
\left(  n+1\right)  +%
\genfrac{\{}{\}}{0pt}{}{j}{n}%
, \label{2sy}%
\end{equation}
one can obtain that%
\begin{align*}
\sum_{k=1}^{\infty}{}_{H}{w}_{k}(x)\frac{t^{k}}{k!}  &  =\left(  1+x\right)
\left(  \sum_{k=0}^{\infty}{w}_{k}(x)\frac{t^{k}}{k!}\right)  \left(
\sum_{j=0}^{\infty}w_{j}\left(  x\right)  \frac{t^{j+1}}{\left(  j+1\right)
!}\right) \\
&  -x\sum_{k=0}^{\infty}{w}_{k}(x)\frac{t^{k+1}}{k!}.
\end{align*}
Therefore, upon performing the required manipulations and comparing the
coefficients of $\frac{t^{k+1}}{k!}$, we obtain the stated result.
\end{proof}

\subsection{Applications of Theorem \ref{teo4}}

In this subsection we present several applications of Theorem \ref{teo4} which
yield the following key formulaBefore
\begin{equation}
_{H}{w}_{n+1}(-1-x)=\left(  -1\right)  ^{n+1}\frac{\left(  x+1\right)  }%
{x}\left(  _{H}{w}_{n+1}(x)+n{w}_{n}(x)\right)  ,\quad n\geq0, \label{6}%
\end{equation}
by substituting $x\rightarrow-x-1$ in (\ref{2}) and using the relation
\cite{Kargin2017}
\begin{equation}
{w}_{n}(-x-1)=\left(  -1\right)  ^{n}\frac{x+1}{x}w_{n}\left(  x\right)
,\text{ }n\geq1. \label{5}%
\end{equation}

First application of this formula is given in the following theorem, which
generalize the well-known formula for harmonic numbers stated in (\ref{16}).

\begin{theorem}
\label{teo6}For $n\geq j\geq1,\,$we\ have%
\[
\sum_{k=j}^{n}%
\genfrac{[}{]}{0pt}{}{n+1}{k+1}%
\genfrac{\{}{\}}{0pt}{}{k}{j}%
k=\dbinom{n+1}{j}\frac{n!}{\left(  j-1\right)  !}\left(  H_{n+1}-H_{j}\right)
.
\]

\end{theorem}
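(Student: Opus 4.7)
The plan is to establish Theorem \ref{teo6} by comparing the exponential generating functions in $n$ (with $j$ held fixed) on both sides of the claimed identity. Rather than trying to extract the identity by expanding (\ref{6}) in powers of $x$ (where no Stirling numbers of the first kind appear on either side), it is cleaner to work directly with the classical generating functions for the two Stirling families and for $H_n$.

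First I would compute the EGF of the left-hand side. Starting from the standard generating function $\sum_{n\geq k}\genfrac{[}{]}{0pt}{}{n}{k}\frac{t^n}{n!}=\frac{(-\ln(1-t))^k}{k!}$, the reindexing $n\mapsto n+1$ and a single differentiation in $t$ give
\[
\sum_{n\geq 0}\genfrac{[}{]}{0pt}{}{n+1}{k+1}\frac{t^n}{n!}=\frac{(-\ln(1-t))^k}{k!(1-t)}.
\]
Exchanging the sums in $n$ and $k$ reduces the EGF of the LHS to $\frac{1}{1-t}\sum_{k\geq j}\genfrac{\{}{\}}{0pt}{}{k}{j}k\,u^k/k!$, where $u:=-\ln(1-t)$. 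Differentiating (\ref{2SGF}) once in $u$ and then multiplying by $u$ gives $\sum_k \genfrac{\{}{\}}{0pt}{}{k}{j}k\,u^k/k!=u(e^u-1)^{j-1}e^u/(j-1)!$. Substituting $e^u=1/(1-t)$ and $e^u-1=t/(1-t)$ collapses the EGF of the LHS to $\frac{-t^{j-1}\ln(1-t)}{(j-1)!(1-t)^{j+1}}$.

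Next I would show that the same expression arises as the EGF of the right-hand side. The splitting of $H_{n+1}-H_j$ together with $\sum_{n\geq 0}\binom{n+1}{j}t^n=t^{j-1}/(1-t)^{j+1}$ handles the $-H_j$-piece at once. For the $H_{n+1}$-piece, reindexing $m=n+1$ yields $\frac{1}{t}\sum_m\binom{m}{j}H_m t^m$, and the general identity $\sum_m\binom{m}{j}a_m t^m=\frac{t^j}{j!}f^{(j)}(t)$, applied with $f(t)=-\ln(1-t)/(1-t)$ from (\ref{hgf}), reduces the problem to computing $\frac{d^j}{dt^j}\bigl(-\ln(1-t)/(1-t)\bigr)$. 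Leibniz with $\frac{d^k}{dt^k}(-\ln(1-t))=(k-1)!/(1-t)^k$ for $k\geq 1$ and $\frac{d^{j-k}}{dt^{j-k}}(1-t)^{-1}=(j-k)!/(1-t)^{j-k+1}$ contributes $\frac{-j!\ln(1-t)}{(1-t)^{j+1}}$ from the $k=0$ summand, while for $k\geq 1$ the summands collapse through $\binom{j}{k}(k-1)!(j-k)!=j!/k$ into $\frac{j!H_j}{(1-t)^{j+1}}$. Assembling these pieces gives $\frac{d^j}{dt^j}\bigl(-\ln(1-t)/(1-t)\bigr)=\frac{j!(H_j-\ln(1-t))}{(1-t)^{j+1}}$, and after combining with the $-H_j$-piece the EGF of the RHS also equals $\frac{-t^{j-1}\ln(1-t)}{(j-1)!(1-t)^{j+1}}$.

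Since the two generating functions coincide, equating coefficients of $t^n/n!$ concludes the proof. The main obstacle I anticipate is the Leibniz evaluation of the $j$-th derivative of $-\ln(1-t)/(1-t)$; the decisive point is that the harmonic number $H_j$ appearing on the right-hand side of the theorem is produced precisely by the collapse $\binom{j}{k}(k-1)!(j-k)!=j!/k$ summed over $k=1,\dots,j$.
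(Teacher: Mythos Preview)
Your generating-function argument is correct and complete: each step checks (the EGF of $\genfrac{[}{]}{0pt}{}{n+1}{k+1}$, the differentiated form of (\ref{2SGF}), the substitution $e^u=1/(1-t)$, the identity $\sum_m\binom{m}{j}a_mt^m=\frac{t^j}{j!}f^{(j)}(t)$, and the Leibniz computation of $f^{(j)}$), and the two EGFs coincide as you claim.

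The route, however, is genuinely different from the paper's. The paper derives Theorem~\ref{teo6} as an application of its key formula (\ref{6}) (a consequence of Theorem~\ref{teo4}): it expands ${}_H w_n(-1-x)$ via (\ref{HGPT}), applies the binomial theorem to $(x+1)^{k-1}$, compares coefficients of $x^j$ to obtain an identity of the form $\sum_k(-1)^{n-k}\genfrac{\{}{\}}{0pt}{}{n}{k}\binom{k}{j}(k-1)!H_k=\cdots$, and then inverts this via the Stirling transform $a_n=\sum_k(-1)^{n-k}\genfrac{\{}{\}}{0pt}{}{n}{k}b_k \Leftrightarrow b_n=\sum_k\genfrac{[}{]}{0pt}{}{n}{k}a_k$ together with the known evaluation $\sum_{k}\genfrac{[}{]}{0pt}{}{n}{k}\genfrac{\{}{\}}{0pt}{}{k}{j}=\binom{n}{j}\frac{(n-1)!}{(j-1)!}$. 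Your proof is more elementary and entirely self-contained---it never touches ${}_H w_n$, $w_n$, or (\ref{6})---which is a virtue for readers who want a direct verification. The paper's proof, by contrast, earns its place by exhibiting Theorem~\ref{teo6} as a corollary of the HGP/GP relationship that is the paper's main theme; the appearance of $\genfrac{[}{]}{0pt}{}{n+1}{k+1}$ there comes from the Stirling inversion rather than from a generating function.
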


\begin{proof}
Using (\ref{6}) and (\ref{HGPT}), we have%
\[
x\sum_{k=1}^{n}\left(  -1\right)  ^{n-k}%
\genfrac{\{}{\}}{0pt}{}{n}{k}%
k!H_{k}\left(  x+1\right)  ^{k-1}=\text{ }_{H}{w}_{n}\left(  x\right)
+\left(  n-1\right)  {w}_{n-1}(x).
\]
Applying the Newton binomial theorem to the equation above and using
(\ref{HGPT}) and (\ref{gp}), we obtain%
\[
\sum_{k=j}^{n}\left(  -1\right)  ^{n-k}%
\genfrac{\{}{\}}{0pt}{}{n}{k}%
\dbinom{k}{j}\left(  k-1\right)  !H_{k}=%
\genfrac{\{}{\}}{0pt}{}{n}{j}%
\left(  j-1\right)  !H_{j}+\left(  n-1\right)
\genfrac{\{}{\}}{0pt}{}{n-1}{j}%
\left(  j-1\right)  !.
\]
Then with the help of Stirling transform \cite{GKP}, namely,%
\[
a_{n}=\sum_{k=0}^{n}\left(  -1\right)  ^{n-k}%
\genfrac{\{}{\}}{0pt}{}{n}{k}%
b_{k}\Leftrightarrow b_{n}=\sum_{k=0}^{n}%
\genfrac{[}{]}{0pt}{}{n}{k}%
a_{k},
\]
where $a_{n}$ and $b_{n}$ are any number sequences, and the relation
\[
\sum_{k=j}^{n}%
\genfrac{[}{]}{0pt}{}{n}{k}%
\genfrac{\{}{\}}{0pt}{}{k}{j}%
=\dbinom{n}{j}\frac{\left(  n-1\right)  !}{\left(  j-1\right)  !},
\]
we obtain the desired formula.
\end{proof}

In the second application of the formula (\ref{6}), it is shown that the
integrals of products of HGP and GP can be evaluated in terms of Bernoulli numbers.

\begin{theorem}
\label{teo3}For non-negative integer $n,$ we have%
\begin{equation}%
{\displaystyle\int\limits_{0}^{1}}
\frac{1-x}{x}\text{ }_{H}{w}_{n+1}(-x)dx=\left(  -1\right)  ^{n}\frac{n-1}%
{2}B_{n}. \label{m=0}%
\end{equation}

For integers $n\geq0,m\geq1$ with even $n+m\geq1,$ we have
\begin{equation}%
{\displaystyle\int\limits_{0}^{1}}
\frac{1-x}{x}\text{ }_{H}{w}_{n+1}(-x){w}_{m}(-x)dx=\left(  -1\right)
^{n}\frac{n}{2}B_{n+m}. \label{13}%
\end{equation}

\end{theorem}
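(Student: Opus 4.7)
The plan is to exploit the substitution $x\mapsto 1-x$ in each integral, together with the functional equation (\ref{6}) for ${}_H w_{n+1}$ and its counterpart (\ref{5}) for $w_m$, in order to reduce each problem to integrals already evaluated in the introduction.

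For the first identity, set $I := \int_0^1 \frac{1-x}{x}\, {}_H w_{n+1}(-x)\, dx$ and substitute $u = 1-x$, obtaining $I = \int_0^1 \frac{u}{1-u}\, {}_H w_{n+1}(u-1)\, du$. Specialising (\ref{6}) at $x\mapsto -u$ gives
\[
{}_H w_{n+1}(u-1) = (-1)^n \frac{1-u}{u}\bigl({}_H w_{n+1}(-u) + n w_n(-u)\bigr),
\]
so the rational factors collapse and $I = (-1)^n \int_0^1 \bigl({}_H w_{n+1}(-u) + n w_n(-u)\bigr)\,du$. Invoking Keller's evaluation (\ref{ihgp}) together with the classical identity (\ref{gp-b}) then reduces $I$ to $(-1)^n\bigl(-\tfrac{n+1}{2}B_n + n B_n\bigr) = (-1)^n\tfrac{n-1}{2}B_n$, which is the stated result.

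For the second identity, set $J := \int_0^1 \frac{1-x}{x}\, {}_H w_{n+1}(-x) w_m(-x)\, dx$ and apply the same substitution $u=1-x$. Now \emph{both} ${}_H w_{n+1}(x-1)$ and $w_m(x-1)$ must be transformed; in particular, (\ref{5}) at $x\mapsto -u$ gives $w_m(u-1) = (-1)^{m+1}\frac{1-u}{u}w_m(-u)$. Substituting both expressions, the $(1-u)^2/u^2$ factor partially cancels against the outer $u/(1-u)$, leaving the self-referential identity
\[
J = (-1)^{n+m+1}\Bigl(J + n\int_0^1 \frac{1-u}{u}\, w_n(-u)w_m(-u)\, du\Bigr).
\]
Here the hypothesis that $n+m$ is even is exactly what is needed: it forces $(-1)^{n+m+1}=-1$, so the $J$-terms combine rather than cancel. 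The remaining integral is the semi-orthogonality formula from the introduction, equal to $(-1)^{n+1}B_{n+m}$; a short sign tally (using $(-1)^m = (-1)^n$ under the parity hypothesis) then produces the claimed $J = (-1)^n\tfrac{n}{2}B_{n+m}$.

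The main obstacle is the sign bookkeeping: one must carefully combine the substitutions $x\mapsto 1-x$ and $x\mapsto -u$ inside both (\ref{5}) and (\ref{6}), while keeping track of the parities of $n$ and $m$. The parity restriction in the second identity is structural rather than cosmetic --- if $n+m$ were odd, the coefficient of $J$ on the right would be $+1$ and the $J$-terms would cancel, yielding no information via this method.
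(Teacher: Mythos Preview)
Your proof is correct and follows essentially the same route as the paper: both arguments combine the functional equation (\ref{6}) for ${}_H w_{n+1}$ with (\ref{5}) for $w_m$ and reduce to the known integrals (\ref{ihgp}), (\ref{gp-b}) and the semi-orthogonality relation. The only cosmetic difference is the order of operations---you substitute $x\mapsto 1-x$ first and then apply (\ref{6}) and (\ref{5}), whereas the paper rewrites (\ref{6}) in the form ${}_H w_{n+1}(x-1)=(-1)^n\frac{1-x}{x}\bigl({}_H w_{n+1}(-x)+n w_n(-x)\bigr)$, multiplies by $w_m(-x)$, integrates, and only then substitutes on the left-hand side; for the first identity this leads the paper to invoke $\int_0^1\frac{1-x}{x}w_n(-x)\,dx$ rather than (\ref{gp-b}), but the content is identical.
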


\begin{proof}
The formula (\ref{6}) can be written in the form
\[
_{H}{w}_{n+1}(-1+x)=\left(  -1\right)  ^{n}\frac{1-x}{x}\text{ }_{H}{w}%
_{n+1}(-x)+\left(  -1\right)  ^{n}n\frac{1-x}{x}{w}_{n}(-x),\text{ }n\geq0.
\]
Multiplying both sides by ${w}_{m}(-x)$ and integrating from $0$ to $1$
yields
\begin{align*}%
{\displaystyle\int\limits_{0}^{1}}
\text{ }_{H}{w}_{n+1}(x-1){w}_{m}(-x)dx  &  =\left(  -1\right)  ^{n}%
{\displaystyle\int\limits_{0}^{1}}
\frac{1-x}{x}\text{ }_{H}{w}_{n+1}(-x){w}_{m}(-x)dx\\
&  +\left(  -1\right)  ^{n}n%
{\displaystyle\int\limits_{0}^{1}}
\frac{1-x}{x}{w}_{n}(-x){w}_{m}(-x)dx
\end{align*}
If formula (\ref{5}) is applied to the first integral, we obtain, for
$m\geq1,$
\[%
{\displaystyle\int\limits_{0}^{1}}
\text{ }_{H}{w}_{n+1}(x-1){w}_{m}(-x)dx=\left(  -1\right)  ^{m+1}%
{\displaystyle\int\limits_{0}^{1}}
\frac{1-x}{x}\text{ }_{H}{w}_{n+1}(-x){w}_{m}(-x)dx.
\]
Finally, by employing, gives that%
\[
\left(  \left(  -1\right)  ^{n}+\left(  -1\right)  ^{m}\right)
{\displaystyle\int\limits_{0}^{1}}
\frac{1-x}{x}\text{ }_{H}{w}_{n+1}(-x){w}_{m}(-x)dx=nB_{n+m}%
\]
which can be stated as in the second equation of theorem.

The first intergral is obtained by integrating both sided of (\ref{6}) with
respect to $x$ from $0$ to $1,$ using (\ref{ihgp}) and the formula
\cite{Kargin2017}%
\[%
{\displaystyle\int\limits_{0}^{1}}
\frac{1-x}{x}{w}_{n}(-x)dx=B_{n}.
\]

\end{proof}

Using (\ref{gp}) and (\ref{HGPT}) in Theorem \ref{teo3} gives new explicit
expressions for Bernoulli numbers, which are given in the following corollary.

\begin{corollary}
\label{cor1}For $n\geq1,$ we have%
\begin{equation}
\sum_{k=1}^{n}\left(  -1\right)  ^{k-1}%
\genfrac{\{}{\}}{0pt}{}{n}{k}%
\left(  k-1\right)  !H_{k}=B_{n-1}.\text{ } \label{18}%
\end{equation}
For $n>1,m\geq1$ with odd $n+m\geq3,$ we have%
\[
\left(  -1\right)  ^{n-1}\frac{\left(  n-1\right)  }{2}B_{n+m-1}=\sum
_{j=1}^{n}\sum_{k=1}^{m}\left(  -1\right)  ^{k+j}%
\genfrac{\{}{\}}{0pt}{}{n}{j}%
\genfrac{\{}{\}}{0pt}{}{m}{k}%
\frac{j!k!H_{j}}{\left(  k+j\right)  \left(  k+j+1\right)  }.
\]

\end{corollary}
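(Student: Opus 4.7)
The plan is to derive both identities by substituting the defining series (\ref{gp}) and (\ref{HGPT}) into the integral formulas of Theorem \ref{teo3} and evaluating the resulting elementary power integrals.

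For the first identity, I would combine (\ref{m=0}) with Keller's integral (\ref{ihgp}) via the trivial split $\frac{1}{x} = \frac{1-x}{x} + 1$, which gives
\[
\int_{0}^{1}\frac{{}_{H}{w}_{n+1}(-x)}{x}\,dx = (-1)^{n}\frac{n-1}{2}B_{n} - \frac{n+1}{2}B_{n}.
\]
A short parity observation simplifies the right-hand side to $-B_{n}$ for every $n\geq 0$: because $B_{n}=0$ for odd $n\geq 3$ and $n-1=0$ when $n=1$, the product $(n-1)B_{n}$ vanishes at every odd $n$, so $(-1)^{n}(n-1)B_{n}=(n-1)B_{n}$ holds for all $n\geq 0$ and the two terms collapse. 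On the other hand, substituting (\ref{HGPT}) on the left and integrating term by term yields
\[
\int_{0}^{1}\frac{{}_{H}{w}_{n+1}(-x)}{x}\,dx = \sum_{k=1}^{n+1}(-1)^{k}\genfrac{\{}{\}}{0pt}{}{n+1}{k}(k-1)!\,H_{k}.
\]
Equating these two expressions, multiplying by $-1$, and shifting $n\mapsto n-1$ produces (\ref{18}).

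For the second identity, I would apply (\ref{13}) with $n$ replaced by $n-1$, so that its parity hypothesis $(n-1)+m$ even coincides with the required $n+m$ odd. Expanding the product ${}_{H}{w}_{n}(-x)\,w_{m}(-x)$ via (\ref{HGPT}) and (\ref{gp}) produces the double sum $\sum_{j,k}(-1)^{j+k}\genfrac{\{}{\}}{0pt}{}{n}{j}\genfrac{\{}{\}}{0pt}{}{m}{k}j!\,k!\,H_{j}\,x^{j+k}$. The elementary computation $\int_{0}^{1}(x^{j+k-1}-x^{j+k})\,dx = \frac{1}{(j+k)(j+k+1)}$ then converts the integral directly into the claimed double sum, and equating with $(-1)^{n-1}\frac{n-1}{2}B_{n+m-1}$ finishes this part.

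The main obstacle will be the parity bookkeeping for (\ref{18})—namely recognizing that $(-1)^{n}\frac{n-1}{2}B_{n}-\frac{n+1}{2}B_{n}$ is identically $-B_{n}$ rather than only on the even-$n$ support of the Bernoulli numbers. Once this step is made explicit, the remaining work—substituting polynomial definitions, exchanging finite sums with integrals, and evaluating power integrals—is entirely routine, and the second identity requires no parity trick beyond aligning the index shift with the hypothesis of (\ref{13}).
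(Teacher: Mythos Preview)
Your proposal is correct and follows essentially the same route as the paper: substitute the explicit expansions (\ref{gp}) and (\ref{HGPT}) into the two integral formulas of Theorem \ref{teo3} and evaluate the resulting power integrals. The paper's proof is just the single sentence ``Using (\ref{gp}) and (\ref{HGPT}) in Theorem \ref{teo3} gives\ldots'', and your computation fills in precisely those details; in particular, your use of (\ref{ihgp}) and the parity observation $(-1)^{n}(n-1)B_{n}=(n-1)B_{n}$ is exactly the extra step needed to reduce the right-hand side of (\ref{m=0}) to the clean form $B_{n-1}$, a step the paper leaves implicit.
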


Before presenting the other application of the key formula (\ref{6}), we first
want to recall poly-Bernoulli numbers, $\mathcal{B}_{n}^{\left(  q\right)  },$
defined by \cite{K1997}
\[
\mathcal{B}_{n}^{\left(  q\right)  }=\left(  -1\right)  ^{n}\sum_{k=0}%
^{n}\left(  -1\right)  ^{k}%
\genfrac{\{}{\}}{0pt}{}{n}{k}%
\frac{k!}{\left(  k+1\right)  ^{q}}.
\]
In particular, $\mathcal{B}_{n}^{\left(  1\right)  }=\left(  -1\right)
^{n}B_{n}.$ Moreover, for odd $n\geq1$ it is known that
\begin{equation}
\mathcal{B}_{n}^{\left(  2\right)  }=-\frac{n-2}{4}B_{n-1}. \label{pb-b}%
\end{equation}

\begin{theorem}
\label{teo8}For $n\geq2,$ we have%
\begin{equation}
\sum_{k=2}^{n}\left(  -1\right)  ^{k}%
\genfrac{\{}{\}}{0pt}{}{n}{k}%
\left(  k-1\right)  !H_{k-1}H_{k}=\frac{n+1}{2}B_{n-2}. \label{12}%
\end{equation}

\end{theorem}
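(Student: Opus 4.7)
The plan is to split the target sum using the Stirling recurrence $\genfrac{\{}{\}}{0pt}{}{n}{k}=k\,\genfrac{\{}{\}}{0pt}{}{n-1}{k}+\genfrac{\{}{\}}{0pt}{}{n-1}{k-1}$ and then exploit a cancellation that removes the only remaining nonelementary subsum. I write the left-hand side of (\ref{12}) as $A+B$, where
\[
A=\sum_{k}(-1)^{k}\genfrac{\{}{\}}{0pt}{}{n-1}{k}k!\,H_{k-1}H_{k},\qquad B=\sum_{k}(-1)^{k}\genfrac{\{}{\}}{0pt}{}{n-1}{k-1}(k-1)!\,H_{k-1}H_{k}.
\]
In $A$ I substitute $H_{k-1}=H_{k}-1/k$; the $-1/k$ piece combines with $k!$ to give $(k-1)!$, and the identity (\ref{18}) immediately evaluates the resulting subsum to $-(-B_{n-2})=B_{n-2}$. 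Setting $Y:=\sum_{k}(-1)^{k}\genfrac{\{}{\}}{0pt}{}{n-1}{k}k!\,H_{k}^{2}$, this yields $A=Y+B_{n-2}$.

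For $B$ I would shift $j=k-1$ and substitute $H_{j+1}=H_{j}+1/(j+1)$. The $1/(j+1)$ piece has exactly the form of Keller's identity (\ref{hg-b}) and contributes $(n-1)B_{n-2}/2$, while the $H_{j}$ piece becomes $-\sum_{j}(-1)^{j}\genfrac{\{}{\}}{0pt}{}{n-1}{j}j!\,H_{j}^{2}=-Y$, the same $Y$ as above with the dummy renamed. Hence $B=-Y+(n-1)B_{n-2}/2$, and adding $A+B$ the $Y$-terms cancel and leave
\[
B_{n-2}+\frac{n-1}{2}B_{n-2}=\frac{n+1}{2}B_{n-2},
\]
which is (\ref{12}).

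The only conceptual obstacle is recognising that the sum $Y$, which has no obvious closed form in terms of Bernoulli numbers, appears with opposite signs in $A$ and $B$ after the two decompositions $H_{k-1}=H_{k}-1/k$ and $H_{j+1}=H_{j}+1/(j+1)$, and therefore drops out without ever needing to be evaluated. Once this cancellation is anticipated, the two residual subsums are precisely the forms that (\ref{18}) and (\ref{hg-b}) already compute, and no further machinery — in particular, no appeal to the poly-Bernoulli relation (\ref{pb-b}) or to any case split on the parity of $n$ — is required.
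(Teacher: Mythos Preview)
Your argument is correct. The Stirling recurrence splits the left-hand side into $A+B$, the two harmonic shifts produce the common term $Y=\sum_{k}(-1)^{k}\genfrac{\{}{\}}{0pt}{}{n-1}{k}k!\,H_{k}^{2}$ with opposite signs, and the residual pieces are evaluated by (\ref{18}) and (\ref{hg-b}) with $n$ replaced by $n-1$; the sum ranges match since the boundary terms vanish via $H_{0}=0$ and $\genfrac{\{}{\}}{0pt}{}{n-1}{n}=0$.

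This is a genuinely different and more economical route than the paper's. The paper proceeds from the reflection formula (\ref{6}) for $_{H}w_{n+1}$: it multiplies by $\ln(1-x)/(1-x)$, integrates over $[0,1]$, and evaluates the resulting integrals via (\ref{21}), (\ref{35}) and (\ref{lngp}) to reach the intermediate identity (\ref{20}). It then splits into the cases $n$ even and $n$ odd, using $B_{2m-1}=0$ for one parity and, for the other, the further reduction $\sum_{k}(-1)^{k}\genfrac{\{}{\}}{0pt}{}{2m}{k}(k-1)!H_{k}/k=\tfrac{2m-3}{4}B_{2m-2}$, which in turn requires (\ref{2sy}), (\ref{18}), (\ref{hg-b}) and the poly-Bernoulli relation (\ref{pb-b}). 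Your proof bypasses the integral machinery and the parity split entirely: the cancellation of $Y$ replaces the role of (\ref{pb-b}), and the whole argument reduces to one application each of (\ref{18}) and (\ref{hg-b}). What the paper's route buys is consistency with its integral-representation theme and, as a by-product, the intermediate identity (\ref{20}); what your route buys is directness and the observation that (\ref{12}) is already a formal consequence of the two earlier Bernoulli formulas without any new analytic input.
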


\begin{proof}
Employing the obvious formula
\begin{equation}%
{\displaystyle\int\limits_{0}^{1}}
y^{k-1}\ln\left(  1-y\right)  dy=\frac{-H_{k}}{k},\text{ }k\geq1, \label{21}%
\end{equation}
in (\ref{gp}), and using (\ref{18}), we obtain the following integral
representation
\begin{equation}%
{\displaystyle\int\limits_{0}^{1}}
\frac{\ln\left(  1-x\right)  }{x}{w}_{n}(-x)dx=B_{n-1},\quad n\geq1.
\label{lngp}%
\end{equation}
Setting $-x$ in (\ref{6}), then multiplying by $\ln\left(  1-x\right)
/\left(  1-x\right)  $ and integrating it with respect to $x$ from $0$ to $1$,
we obtain%
\begin{align*}%
{\displaystyle\int\limits_{0}^{1}}
\frac{\ln\left(  1-x\right)  }{1-x}\text{ }_{H}{w}_{n+1}(-1+x)dx  &  =\left(
-1\right)  ^{n}%
{\displaystyle\int\limits_{0}^{1}}
\frac{\ln\left(  1-x\right)  }{x}\text{ }_{H}{w}_{n+1}(-x)dx\\
&  +n\left(  -1\right)  ^{n}%
{\displaystyle\int\limits_{0}^{1}}
\frac{\ln\left(  1-x\right)  }{x}{w}_{n}(-x)dx.
\end{align*}
The second integral on the RHS can be evaluated by (\ref{lngp}). Let us change
the variable $x-1=-y$ on the LHS, then using the formula%
\begin{equation}%
{\displaystyle\int\limits_{0}^{1}}
y^{k-1}\ln^{p}ydy=\frac{\left(  -1\right)  ^{p}p!}{k^{p+1}} \label{35}%
\end{equation}
in (\ref{HGPT}), we evaluate the integral on the LHS as follows:%
\[%
{\displaystyle\int\limits_{0}^{1}}
\frac{\ln\left(  1-x\right)  }{1-x}\text{ }_{H}{w}_{n+1}(-1+x)dx=\sum
_{k=1}^{n+1}\left(  -1\right)  ^{k+1}%
\genfrac{\{}{\}}{0pt}{}{n+1}{k}%
\frac{\left(  k-1\right)  !H_{k}}{k}.
\]
Moreover, the first integral on the RHS can be evaluated by using (\ref{21})
in (\ref{HGPT}) as%
\[%
{\displaystyle\int\limits_{0}^{1}}
\frac{\ln\left(  1-x\right)  }{x}\text{ }_{H}{w}_{n+1}(-x)dx=\sum_{k=1}%
^{n+1}\left(  -1\right)  ^{k+1}%
\genfrac{\{}{\}}{0pt}{}{n+1}{k}%
\left(  k-1\right)  !\left(  H_{k}\right)  ^{2}.
\]
Combining all these results gives
\begin{align}
n\left(  -1\right)  ^{n}B_{n-1}  &  =\left(  -1\right)  ^{n}\sum_{k=1}%
^{n+1}\left(  -1\right)  ^{k+1}%
\genfrac{\{}{\}}{0pt}{}{n+1}{k}%
\left(  k-1\right)  !\left(  H_{k}\right)  ^{2}\nonumber\\
&  -\sum_{k=1}^{n+1}\left(  -1\right)  ^{k+1}%
\genfrac{\{}{\}}{0pt}{}{n+1}{k}%
\frac{\left(  k-1\right)  !H_{k}}{k}. \label{20}%
\end{align}
Since $B_{2n-1}=0$ for $n>1$, for $n=2m>2,$ we obtain that
\begin{equation}
\sum_{k=2}^{2m+1}\left(  -1\right)  ^{k}%
\genfrac{\{}{\}}{0pt}{}{2m+1}{k}%
\left(  k-1\right)  !H_{k-1}H_{k}=0. \label{32}%
\end{equation}
When $n=2m-1\geq3$, (\ref{20}) can be written as%
\begin{align}
\left(  2m-1\right)  B_{2m-2}  &  =2\sum_{k=1}^{2m}\left(  -1\right)  ^{k}%
\genfrac{\{}{\}}{0pt}{}{2m}{k}%
\frac{\left(  k-1\right)  !H_{k}}{k}\nonumber\\
&  +\sum_{k=1}^{2m}\left(  -1\right)  ^{k}%
\genfrac{\{}{\}}{0pt}{}{2m}{k}%
\left(  k-1\right)  !H_{k}H_{k-1}. \label{31}%
\end{align}
Using (\ref{2sy}), (\ref{18}), (\ref{hg-b}) and (\ref{pb-b}), we arrive at the
formula%
\[
\sum_{k=1}^{2m}\left(  -1\right)  ^{k}%
\genfrac{\{}{\}}{0pt}{}{2m}{k}%
\frac{\left(  k-1\right)  !H_{k}}{k}=\frac{2m-3}{4}B_{2m-2}.
\]
Empolying the formula above to (\ref{31}) gives
\begin{equation}
\frac{2m+1}{2}B_{2m-2}=\sum_{k=1}^{2m}\left(  -1\right)  ^{k}%
\genfrac{\{}{\}}{0pt}{}{2m}{k}%
\left(  k-1\right)  !H_{k}H_{k-1}.\nonumber
\end{equation}
From (\ref{32}) and the formula above, we achieve the stated formula.
\end{proof}

The last application of the formula (\ref{6}) is a recurrence relation for HGP.

\begin{proposition}
For $n\geq2$%
\begin{equation}
x\sum_{k=1}^{n}\binom{n}{k}{}_{H}{w}_{k}(x)=\left(  x+1\right)  _{H}{w}%
_{n}(x)-\left(  x+1\right)  {w}_{n-1}(x). \label{4}%
\end{equation}

\end{proposition}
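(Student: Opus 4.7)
The plan is to reduce (\ref{4}) via (\ref{6}) and (\ref{5}) to a clean binomial-transform identity for $_H w_n$, and then establish that identity by exponential generating functions.

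First, applying (\ref{6}) with $n$ replaced by $n-1$ gives $(x+1)\,{}_H w_n(x) = (-1)^n x\,{}_H w_n(-1-x) - (n-1)(x+1)\,w_{n-1}(x)$, while (\ref{5}), applied at index $n-1$ (which requires $n\ge 2$), yields $(x+1)\,w_{n-1}(x) = (-1)^{n-1} x\,w_{n-1}(-1-x)$. Combining these, the right-hand side of (\ref{4}) collapses to $(-1)^n x\bigl[{}_H w_n(-1-x) + n\,w_{n-1}(-1-x)\bigr]$, so after cancelling the factor $x$ the problem reduces to the polynomial identity
$$\sum_{k=1}^{n}\binom{n}{k}\,{}_H w_k(x) = (-1)^n\bigl[{}_H w_n(-1-x) + n\,w_{n-1}(-1-x)\bigr].$$

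For this last identity I would compare exponential generating functions in $n$. Since ${}_H w_0(x)=0$, the left-hand side is a binomial transform, so its EGF is $e^{t}F(t,x)$ with $F(t,x)=-\ln(1-x(e^{t}-1))/(1-x(e^{t}-1))$ from (\ref{hggf}). For the right-hand side, the $(-1)^n$ is absorbed by substituting $t\mapsto -t$, and invoking (\ref{hggf}) and (\ref{guf}) at argument $-1-x$ produces denominators of the form $1+(1+x)(e^{-t}-1)$. The key algebraic manipulation is
$$1+(1+x)(e^{-t}-1) = e^{-t}\bigl(1-x(e^{t}-1)\bigr),$$
which turns the logarithm into $-t+\ln(1-x(e^{t}-1))$ and the reciprocal into $e^{t}/(1-x(e^{t}-1))$. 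After this substitution, the contribution from the $n\,w_{n-1}(-1-x)$ summand cancels the stray $-t$, and both EGFs reduce to $-e^{t}\ln(1-x(e^{t}-1))/(1-x(e^{t}-1))$.

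The main technical obstacle is spotting and exploiting the identity $1+(1+x)(e^{-t}-1)=e^{-t}(1-x(e^{t}-1))$, without which the shifted-argument logarithm and reciprocal do not visibly collapse onto the original generating function. The restriction $n\ge 2$ in the statement is explained by the need to apply (\ref{5}) at index $n-1\ge 1$ in the first step; a direct check at $n=1$ confirms the formula genuinely fails there, with the left-hand side equal to $x^{2}$ and the right-hand side equal to $x^{2}-1$.
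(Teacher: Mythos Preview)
Your argument is correct and uses essentially the same two ingredients as the paper's proof: the generating-function computation (hinging on the factorisation $1+(1+x)(e^{\pm t}-1)=e^{\pm t}(1-x(e^{\mp t}-1))$) together with the transformations (\ref{5}) and (\ref{6}). The only difference is the order: the paper first substitutes $x\mapsto -x-1$ in (\ref{hggf}) to obtain the binomial-transform identity
\[
(-1)^{n}\sum_{k=0}^{n}\binom{n}{k}{}_{H}w_{k}(x)={}_{H}w_{n}(-x-1)+n\,w_{n-1}(-x-1),
\]
and then applies (\ref{5}) and (\ref{6}) to reach (\ref{4}), whereas you reduce (\ref{4}) to this same identity via (\ref{5}) and (\ref{6}) first and then verify it by EGF. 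Your write-up is in fact more explicit about the key algebraic step, and your remark on why $n\ge 2$ is needed (namely that (\ref{5}) is invoked at index $n-1$) is a nice addition.
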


\begin{proof}
Setting $x\rightarrow-x-1$ in (\ref{hggf}) gives%
\[
\sum_{n=1}^{\infty}{}_{H}{w}_{n}(-x-1)\frac{t^{n}}{n!}=-\sum_{n=1}^{\infty
}n{w}_{n-1}(-x-1)\frac{t^{n}}{n!}+\sum_{n=1}^{\infty}\sum_{k=0}^{n}\binom
{n}{k}{}_{H}{w}_{k}(x)\frac{\left(  -t\right)  ^{n}}{n!}.
\]
Comparing the coefficients of $\frac{t^{n}}{n!}$ yields%
\[
\left(  -1\right)  ^{n}\sum_{k=0}^{n}\binom{n}{k}{}_{H}{w}_{k}(x)={}_{H}%
{w}_{n}(-x-1)+n{w}_{n-1}(-x-1).
\]
Using (\ref{5}) and (\ref{6}), we obtain the stated formula.
\end{proof}

\section{Further Results}

In this section, we focus on the recurrence relations of HGP and one more
representation of HGP written in terms of GP. Moreover, we evaluate a finite
sum involving harmonic numbers and positive powers of integers.

In the following theorem we state that the sums of products of HGP and GP can
be written in terms of themselves.

\begin{lemma}
\label{teo1}For non-negative integer $n$, we have
\begin{equation}
\left(  x+1\right)  \sum_{k=0}^{n}\binom{n}{k}\text{ }_{H}{w}_{n-k}(x){w}%
_{k}\left(  x\right)  =\text{ }_{H}{w}_{n+1}(x)+\text{ }_{H}{w}_{n}%
(x)-{w}_{n+1}(x). \label{8}%
\end{equation}

\end{lemma}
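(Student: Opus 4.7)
The plan is to prove (\ref{8}) by comparing Cauchy products of the exponential generating functions on both sides. Let
\[
F(t):=\sum_{n=0}^{\infty}{}_{H}w_{n}(x)\frac{t^{n}}{n!},\qquad G(t):=\sum_{n=0}^{\infty}w_{n}(x)\frac{t^{n}}{n!},
\]
and set $u:=x(e^{t}-1)$, so that by (\ref{guf}) and (\ref{hggf}),
\[
G(t)=\frac{1}{1-u},\qquad F(t)=\frac{-\ln(1-u)}{1-u}=-G(t)\ln(1-u).
\]
The key observation is that the left-hand side of (\ref{8}) is the coefficient of $t^{n}/n!$ in $(x+1)F(t)G(t)$, while the right-hand side is the coefficient of $t^{n}/n!$ in $F'(t)+F(t)-G'(t)$ (since differentiating an EGF shifts the index by one). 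Hence it suffices to establish the functional identity
\begin{equation}
F'(t)+F(t)-G'(t)=(x+1)F(t)G(t).\label{plan-id}
\end{equation}

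First I would compute the two derivatives. Using $du/dt=xe^{t}=x+u$ (since $u=x(e^{t}-1)$), a direct differentiation gives
\[
G'(t)=\frac{x+u}{(1-u)^{2}},\qquad F'(t)=\frac{(1-\ln(1-u))(x+u)}{(1-u)^{2}}.
\]
Subtracting, the ``$1$'' in the numerator of $F'(t)$ cancels against $G'(t)$, leaving
\[
F'(t)-G'(t)=\frac{-\ln(1-u)(x+u)}{(1-u)^{2}}=(x+u)\,F(t)G(t).
\]
Adding $F(t)$ to both sides and pulling $F(t)$ out yields
\[
F'(t)+F(t)-G'(t)=F(t)\left[(x+u)G(t)+1\right]=F(t)\cdot\frac{x+u+1-u}{1-u}=(x+1)F(t)G(t),
\]
which is exactly (\ref{plan-id}).

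With (\ref{plan-id}) in hand, the last step is to expand both sides as exponential generating series and compare the coefficient of $t^{n}/n!$. On the left, $F'(t)+F(t)-G'(t)$ contributes ${}_{H}w_{n+1}(x)+{}_{H}w_{n}(x)-w_{n+1}(x)$; on the right, the Cauchy product gives $(x+1)\sum_{k=0}^{n}\binom{n}{k}{}_{H}w_{n-k}(x)w_{k}(x)$. This matches (\ref{8}) verbatim. I expect no serious obstacle; the only delicate point is bookkeeping the cancellation $x+u+1-u=x+1$ that produces the prefactor $(x+1)$ on the right-hand side of the lemma, and recognizing that $xe^{t}=x+u$ is what makes the derivatives combine cleanly with the logarithmic factor inside $F$.
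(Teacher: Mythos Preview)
Your proof is correct. Both you and the paper differentiate the exponential generating function $F(t)$ with respect to $t$ and then identify the result as a Cauchy product, so the underlying strategy is the same. The execution differs: the paper recognizes the factor $\dfrac{xe^{t}}{1-x(e^{t}-1)}$ as $x\sum_{k\geq 0}(-1)^{k}w_{k}(-1-x)\,t^{k}/k!$, and then invokes the reflection formula~(\ref{5}) and the product identity~(\ref{10}) to reduce to the stated form. You instead work entirely in closed form via the substitution $u=x(e^{t}-1)$ and the observation $du/dt=x+u$, which collapses the algebra to the single cancellation $x+u+1-u=x+1$. Your route is more self-contained---it does not rely on (\ref{5}) or (\ref{10})---while the paper's route has the side benefit of exhibiting the connection with $w_{k}(-1-x)$ that is used elsewhere in the paper.
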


\begin{proof}
Differentiating (\ref{hggf}) with respect to $t$ gives rise to%
\begin{align*}
\sum_{n=0}^{\infty}{}_{H}{w}_{n+1}(x)\frac{t^{n}}{n!}  &  =\frac{d}{dt}\left(
-\frac{\ln\left(  1-x(e^{t}-1)\right)  }{1-x(e^{t}-1)}\right) \\
&  =-\frac{\ln\left(  1-x(e^{t}-1)\right)  }{\left(  1-x(e^{t}-1)\right)
}\frac{xe^{t}}{\left(  1-x(e^{t}-1)\right)  }\\
&  +\frac{xe^{t}}{\left(  1-x(e^{t}-1)\right)  }\frac{1}{\left(
1-x(e^{t}-1)\right)  }\\
&  =x\sum_{n=0}^{\infty}\left(  \sum_{k=0}^{n}\binom{n}{k}\left(  -1\right)
^{k}{w}_{k}(-1-x){}_{H}{w}_{n-k}(x)\frac{t^{n}}{n!}\right)  \frac{t^{n}}{n!}\\
&  +x\sum_{n=0}^{\infty}\left(  \sum_{k=0}^{n}\binom{n}{k}\left(  -1\right)
^{k}{w}_{k}(-1-x){w}_{n-k}(x)\right)  \frac{t^{n}}{n!}.
\end{align*}
Comparing the coefficients of $\frac{t^{n}}{n!},$ using (\ref{5}) and the
relation \cite{Kargin2017}
\begin{equation}
\sum_{k=0}^{n}\binom{n}{k}{w}_{k}\left(  x\right)  {w}_{n-k}(x)=\frac{1}%
{1+x}\left(  {w}_{n+1}(x)+{w}_{n}(x)\right)  , \label{10}%
\end{equation}
we obtain the stated formula.
\end{proof}

Now we give a recurrence relation for the HGP.

\begin{proposition}
\label{pro1}For non-negative integer $n$%
\begin{equation}
\text{ }_{H}{w}_{n+1}(x)=x\frac{d}{dx}\left(  \left(  1+x\right)  \text{ }%
_{H}{w}_{n}(x)\right)  +x{w}_{n}(x) \label{tf}%
\end{equation}

\end{proposition}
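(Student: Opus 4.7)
The plan is to derive the recurrence directly from the defining series \eqref{HGPT}, using the Stirling number recurrence \eqref{2sy} together with $H_{j+1}=H_{j}+\tfrac{1}{j+1}$. A generating-function derivation starting from \eqref{hggf} is also available---one differentiates with respect to $t$, uses the chain rule for $u=x(e^{t}-1)$, and simplifies with the identity $u/(1-u)=1/(1-u)-1$---but the term-level calculation appears shorter and more transparent.

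Starting from
\[
{}_{H}w_{n+1}(x)=\sum_{k=1}^{n+1}\genfrac{\{}{\}}{0pt}{}{n+1}{k}k!\,H_{k}\,x^{k},
\]
I would apply \eqref{2sy} to split this into a sum with factor $k\genfrac{\{}{\}}{0pt}{}{n}{k}$ and a sum with factor $\genfrac{\{}{\}}{0pt}{}{n}{k-1}$. In the latter, shifting $j=k-1$ and expanding $H_{j+1}=H_{j}+\tfrac{1}{j+1}$ splits it further, producing three pieces: the sum $\sum_{k}\genfrac{\{}{\}}{0pt}{}{n}{k}\,k\cdot k!\,H_{k}\,x^{k}$, which equals $x\frac{d}{dx}\,{}_{H}w_{n}(x)$ by termwise differentiation of \eqref{HGPT}; the sum $\sum_{j}\genfrac{\{}{\}}{0pt}{}{n}{j}(j+1)!\,H_{j}\,x^{j+1}$, which equals $x\frac{d}{dx}\bigl(x\,{}_{H}w_{n}(x)\bigr)$; and the sum $\sum_{j}\genfrac{\{}{\}}{0pt}{}{n}{j}\,j!\,x^{j+1}=x\,w_{n}(x)$ by \eqref{gp}.

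Collecting these gives
\[
{}_{H}w_{n+1}(x)=x\frac{d}{dx}\,{}_{H}w_{n}(x)+x\frac{d}{dx}\bigl(x\,{}_{H}w_{n}(x)\bigr)+x\,w_{n}(x),
\]
and the first two terms combine by linearity into $x\frac{d}{dx}\bigl((1+x)\,{}_{H}w_{n}(x)\bigr)$, yielding \eqref{tf}. The argument is essentially bookkeeping; the only nontrivial step is the shift-and-split in the second sum, which is precisely where the extra $x\,w_{n}(x)$ term on the right-hand side of \eqref{tf} arises (from the $1/(j+1)$ summand in $H_{j+1}$ cancelling one factor of $(j+1)!$).
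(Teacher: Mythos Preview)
Your proof is correct. The term-level argument via the Stirling recurrence \eqref{2sy} and $H_{j+1}=H_j+\tfrac{1}{j+1}$ works exactly as you describe, and the three pieces you identify are each verified by a one-line check against the definitions \eqref{HGPT} and \eqref{gp}.

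The paper takes a different route: it differentiates the generating function \eqref{hggf} with respect to $x$ (not $t$), obtains an expression involving the convolutions $\sum_{k}\binom{n}{k}{}_{H}w_{n-k}(x)w_k(x)$ and $\sum_{k}\binom{n}{k}w_{n-k}(x)w_k(x)$, and then invokes Lemma~\ref{teo1} (equation \eqref{8}) together with the product formula \eqref{10} to collapse those convolutions. Your argument is strictly more elementary and self-contained---it needs none of the earlier machinery in the paper, in particular it bypasses Lemma~\ref{teo1} entirely. The paper's approach, on the other hand, situates the recurrence within the product-sum identities it has already developed, so each proof illuminates a different structural aspect. (The paper also notes that Kellner \cite{Keller2014} proved this proposition by yet another method.)
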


\begin{proof}
Differentiating (\ref{hggf}) with respect to $x\ $gives
\begin{align*}
&  \sum_{n=0}^{\infty}\frac{d}{dx}{}_{H}{w}_{n}(x)\frac{t^{n}}{n!}\\
&  \qquad=\frac{-\ln\left(  1-x(e^{t}-1)\right)  }{1-x(e^{t}-1)}\left(
\frac{-1}{x}+\frac{1}{x}\frac{1}{1-x(e^{t}-1)}\right) \\
&  \qquad+\frac{1}{1-x(e^{t}-1)}\left(  \frac{-1}{x}+\frac{1}{x}\frac
{1}{1-x(e^{t}-1)}\right)  .
\end{align*}
Then employing (\ref{hggf}) and (\ref{guf}), and comparing the coefficients,
we obtain
\begin{align*}
x\frac{d}{dx}{}_{H}{w}_{n}(x)  &  =-{}_{H}{w}_{n}(x)-{w}_{n}(x)\\
&  +\sum_{k=0}^{n}\binom{n}{k}\text{ }_{H}{w}_{n-k}(x){w}_{k}\left(  x\right)
\\
&  +\sum_{k=0}^{n}\binom{n}{k}{w}_{n-k}(x){w}_{k}\left(  x\right)  .
\end{align*}
The proof is completed by using equations (\ref{8}) and (\ref{10}).
\end{proof}

It is worth noting that Proposition \ref{pro1} was also proved by using a
different method in \cite{Keller2014}. We now turn to an application of this
proposition. Proposition \ref{pro1} enables us to obtain an additional
representation of HGP in terms of GP as well as a recurrence relation for HGP.
These results are stated in the following proposition and follow from applying
(\ref{tf}) and the formula \cite{DK2011}
\[
x\frac{d}{dx}\left(  \left(  1+x\right)  w_{n}\left(  x\right)  \right)
=w_{n+1}\left(  x\right)
\]
to (\ref{2}) and (\ref{4}) respectively.

\begin{proposition}
\label{pro2}For non-negative integer $n,$ we have%
\[
\text{ }_{H}{w}_{m+1}(x)=w_{m+1}\left(  x\right)  +mx{w}_{m}(x)+\left(
1+x\right)  \sum_{k=0}^{m-2}\binom{m}{k}w_{m-k-1}\left(  x\right)
w_{k+1}\left(  x\right)  .
\]
For $n\geq2,$ we have%
\[
\frac{x}{1+x}\sum_{k=0}^{n}\binom{n}{k}\text{ }_{H}{w}_{k+1}(x)=\text{ }%
_{H}{w}_{n+1}(x)-\text{ }_{H}{w}_{n}(x)-{w}_{n}(x)+{w}_{n-1}(x).
\]

\end{proposition}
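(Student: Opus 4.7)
The plan is to derive both claims from the previously-established identities (\ref{2}) and (\ref{4}), using Proposition~\ref{pro1} (equation (\ref{tf})) and the Dil--Kurt identity $x\,\frac{d}{dx}\bigl[(1+x)w_{n}(x)\bigr]=w_{n+1}(x)$ from \cite{DK2011}. It is convenient to introduce the shift-like operator $L[f](x):=x\,\frac{d}{dx}\bigl[(1+x)f(x)\bigr]$, so that these two rules read $L[w_{n}]=w_{n+1}$ and $L[{}_{H}w_{n}]={}_{H}w_{n+1}-xw_{n}$.

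For the first claim, I would apply $L$ to both sides of (\ref{2}). The left-hand side becomes ${}_{H}w_{m+1}(x)-xw_{m}(x)$. On the right, the crucial factorization is $(1+x)^{2}w_{a}(x)w_{b}(x)=[(1+x)w_{a}(x)]\cdot[(1+x)w_{b}(x)]$, so the product rule together with $\frac{d}{dx}[(1+x)w_{n}]=w_{n+1}/x$ gives $x\,\frac{d}{dx}\bigl[(1+x)^{2}w_{a}w_{b}\bigr]=(1+x)[w_{a+1}w_{b}+w_{a}w_{b+1}]$. Treating the piece $(1+x-m)w_{m-1}$ analogously, the right-hand side expands into two convolution-type sums $(1+x)\sum_{k=1}^{m-1}\binom{m}{k}w_{k}w_{m-k}$ and $(1+x)\sum_{k=1}^{m-1}\binom{m}{k}w_{k-1}w_{m-k+1}$, together with a few low-order $w_{m-1},w_{m}$ terms. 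Using (\ref{10}) to collapse the first sum (modulo its $k=0,m$ boundary contributions) produces the $w_{m+1}(x)$ and $mx\,w_{m}(x)$ pieces of the target, while the substitution $k\mapsto k-1$ and the symmetry $\binom{m}{k}=\binom{m}{m-k}$ rewrite the second sum as $(1+x)\sum_{k=0}^{m-2}\binom{m}{k}w_{m-k-1}(x)w_{k+1}(x)$.

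For the second claim I would pursue a shorter route: write (\ref{4}) once at index $n$ and once at index $n+1$ and subtract. Pascal's rule $\binom{n+1}{k}=\binom{n}{k}+\binom{n}{k-1}$ splits the $(n+1)$-indexed sum as $\sum_{k=1}^{n}\binom{n}{k}{}_{H}w_{k}(x)+\sum_{j=0}^{n}\binom{n}{j}{}_{H}w_{j+1}(x)$, the first piece of which cancels the $n$-indexed sum, leaving exactly $x\sum_{j=0}^{n}\binom{n}{j}{}_{H}w_{j+1}(x)$ on the left. The corresponding difference on the right simplifies to $(1+x)[{}_{H}w_{n+1}(x)-{}_{H}w_{n}(x)-w_{n}(x)+w_{n-1}(x)]$, and dividing by $1+x$ yields the stated identity.

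The main obstacle is the bookkeeping in the first claim: after differentiating one collects the extraneous terms $x(1+x)w_{m-1}$ and $(1+2x-m)w_{m}$, which must be balanced against the $k=0$ and $k=m-1$ boundary contributions of the two convolution sums so that Pascal's rule $\binom{m}{k}+\binom{m}{k+1}=\binom{m+1}{k+1}$, used in reverse after the reindexing, consolidates the non-boundary terms into a single $\binom{m}{k}$-weighted sum. Ensuring that the boundary pieces recombine exactly into $w_{m+1}(x)+mx\,w_{m}(x)+(1+x)x\,w_{m-1}(x)$ without stray remainders is the most error-prone step, although each step is purely algebraic once the operator identities $L[w_{n}]=w_{n+1}$ and $L[{}_{H}w_{n}]={}_{H}w_{n+1}-xw_{n}$ are in hand.
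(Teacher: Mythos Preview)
Your proof of the first identity follows exactly the paper's indicated route: apply the operator $L[f]=x\,\frac{d}{dx}[(1+x)f]$ to (\ref{2}), using (\ref{tf}) and the Dil--Kurt rule $L[w_{n}]=w_{n+1}$, and the bookkeeping you describe does close correctly (the $w_{m}$ contributions collapse to $mx\,w_{m}$, the $w_{m-1}$ contributions cancel, and the surviving convolution is the desired one after the reindexing you outline).

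For the second identity your approach is genuinely different from the paper's. The paper derives it by applying the same differential operator $L$ to (\ref{4}); you instead evaluate (\ref{4}) at $n$ and at $n+1$, subtract, and use Pascal's rule $\binom{n+1}{k}=\binom{n}{k}+\binom{n}{k-1}$ so that the $\binom{n}{k}$-part cancels and the $\binom{n}{k-1}$-part reindexes to $\sum_{j=0}^{n}\binom{n}{j}{}_{H}w_{j+1}$. Your route is shorter and entirely elementary, avoiding any differentiation; the paper's route has the conceptual advantage of treating both parts of the proposition uniformly via the single shift operator $L$. Both are valid and yield the stated formula for $n\ge 2$.
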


The following theorem provides an explicit expression for the partial sums of
the series in (\ref{19}) in terms of HGP and GP.

\begin{theorem}
\label{teo2}For $\left\vert x\right\vert <1,\,_{H}A^{\left(  p\right)
}\left(  m;x\right)  $ can be written as a linear combination of HGP, GP and
the functions $\ln(1-x)$ and $\frac{1}{1-x}$. Namely;%
\[
_{H}A^{\left(  p\right)  }\left(  m;x\right)  =\text{ }_{H}A\left(
m;x\right)  -\text{ }_{H}A_{p}\left(  m;x\right)
\]
where%
\begin{align}
&  _{H}A_{p}\left(  m;x\right)  :=\sum_{n=p+1}^{\infty}H_{n}n^{m}x^{n}%
=x^{p}\sum_{k=0}^{m}\binom{m}{k}p^{m-k}\text{ }_{H}A\left(  k;x\right)
\nonumber\\
&  \qquad\qquad\qquad\qquad\qquad\quad\quad+\sum_{k=0}^{m}\binom{m}{k}A\left(
k-1;x\right)  A^{\left(  p-1\right)  }\left(  m-k;x\right) \nonumber\\
&  \qquad\quad\qquad\qquad\qquad\qquad\quad-\sum_{k=0}^{m}\sum_{j=0}%
^{p-1}\binom{m}{k}j^{m-k}A^{\left(  p-j\right)  }\left(  k-1;x\right)  x^{j}
\label{11}%
\end{align}
with $A\left(  -1;x\right)  =-\ln\left(  1-x\right)  /\left(  1-x\right)  .$
\end{theorem}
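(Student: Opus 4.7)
The identity ${}_{H}A^{(p)}(m;x) = {}_{H}A(m;x) - {}_{H}A_{p}(m;x)$ is nothing more than splitting the convergent series $\sum_{n\ge 1}H_{n}n^{m}x^{n}$ at the index $p$, so the whole content of the theorem lies in establishing the explicit formula (\ref{11}) for the tail ${}_{H}A_{p}(m;x) = \sum_{n\ge p+1}H_{n}n^{m}x^{n}$. My plan is to perform the index shift $n = N+p$ (so $N\ge 1$), giving
\[
{}_{H}A_{p}(m;x) = x^{p}\sum_{N\ge 1}H_{N+p}(N+p)^{m}x^{N},
\]
and then to insert the elementary harmonic identity $H_{N+p} = H_{N} + \sum_{j=1}^{p}\tfrac{1}{N+j}$ to split the tail into two contributions.

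The $H_{N}$-piece is the easy one: since $H_{0}=0$ the sum over $N$ extends freely to $N\ge 0$, and the binomial expansion $(N+p)^{m} = \sum_{k=0}^{m}\binom{m}{k}N^{k}p^{m-k}$ together with the definition of ${}_{H}A(k;x)$ reassembles it as $x^{p}\sum_{k=0}^{m}\binom{m}{k}p^{m-k}\,{}_{H}A(k;x)$, which is exactly the first term in (\ref{11}). The second piece
\[
x^{p}\sum_{j=1}^{p}\sum_{N\ge 1}\frac{(N+p)^{m}\,x^{N}}{N+j}
\]
is handled by the alternative expansion $(N+p)^{m} = ((N+j)+(p-j))^{m} = \sum_{k}\binom{m}{k}(N+j)^{k}(p-j)^{m-k}$; the factor $(N+j)^{k}$ cancels the denominator to $(N+j)^{k-1}$. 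After shifting $M = N+j$ and writing $\sum_{M\ge j+1} = \sum_{M\ge 0} - \sum_{M=0}^{j}$, the inner sum decomposes into $A(k-1;x) - A^{(j)}(k-1;x)$. Reversing the order of summation and replacing the index $j$ by $\ell = p-j$ collapses the outer factor $x^{\ell}\,\ell^{m-k}$ against $A^{(p-1)}(m-k;x) = \sum_{\ell=0}^{p-1}\ell^{m-k}x^{\ell}$ in the $A(k-1;x)$-channel, producing Term 2, while the $A^{(p-\ell)}(k-1;x)$-channel reassembles into Term 3 of (\ref{11}).

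Once (\ref{11}) is in hand, the linear-combination claim is immediate: by (\ref{gps}), (\ref{gpp}) and (\ref{19}), each of $A(k;x)$, $A^{(q)}(k;x)$ and ${}_{H}A(k;x)$ is already a combination of GP, HGP, $\ln(1-x)$ and $1/(1-x)$, so the same is true of ${}_{H}A^{(p)}(m;x)$. The main obstacle I anticipate is the bookkeeping for the $k=0$ index in the second piece: the inner sum $\sum_{M\ge j+1}x^{M}/M$ is a logarithmic tail rather than a genuine $A$-partial sum, and the resulting factors of $1-x$ must be reconciled with the convention $A(-1;x) = -\ln(1-x)/(1-x)$ fixed in the statement. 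A quick check on small cases such as $(m,p)=(0,2)$ and $(m,p)=(2,1)$ is the fastest way to pin down the correct alignment before attempting the general reorganization.
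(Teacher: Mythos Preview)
Your proposal is correct and follows essentially the same route as the paper: the index shift $n\mapsto n+p$, the split $H_{n+p}=H_{n}+\sum_{j=1}^{p}\frac{1}{n+j}$, and a binomial reorganization of the resulting sums into the building blocks $A$, $A^{(q)}$, and ${}_{H}A$. The only difference is cosmetic: the paper first expands $(n+p)^{m}$ in powers of $n$ and then re-expands each $n^{k}$ in powers of $n+j$ (treating the $k=0$ logarithmic tail separately, exactly as you anticipated), whereas your single expansion $(N+p)^{m}=\sum_{k}\binom{m}{k}(N+j)^{k}(p-j)^{m-k}$ reaches the three terms of (\ref{11}) more directly after the substitution $\ell=p-j$.
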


\begin{proof}
It is obvious that%
\[
\text{ }_{H}A^{\left(  p\right)  }\left(  m;x\right)  =\text{ }_{H}A\left(
m;x\right)  -\sum_{n=p+1}^{\infty}H_{n}n^{m}x^{n}.
\]
Accordingly, we need to evaluate $\sum_{n=p+1}^{\infty}H_{n}n^{m}x^{n}$ in a
closed form. With some manipulation, one can get that%
\begin{align*}
\sum_{n=p+1}^{\infty}H_{n}n^{m}x^{n}  &  =x^{p}\sum_{n=1}^{\infty}%
H_{n+p}\left(  n+p\right)  ^{m}x^{n}\\
&  =x^{p}\sum_{k=0}^{m}\binom{m}{k}p^{m-k}\mathcal{H}\left(  k;x\right)
+x^{p}p^{m}\sum_{j=1}^{p}\sum_{n=1}^{\infty}\frac{1}{n+j}x^{n}\\
&  +x^{p}\sum_{k=1}^{m}\binom{m}{k}p^{m-k}\sum_{j=1}^{p}\sum_{n=1}^{\infty
}\frac{n^{k}}{n+j}x^{n}.
\end{align*}
It is easy to see that%
\[
\sum_{n=1}^{\infty}\frac{x^{n}}{n+j}=-x^{-j}\frac{\ln\left(  1-x\right)
}{1-x}-\sum_{n=1}^{j}\frac{1}{n}x^{n-j}.
\]
So we need to evaluate the series $\sum_{n=1}^{\infty}\frac{n^{k}}{n+j}x^{n}.$
It is obvious that%
\begin{align*}
\sum_{n=1}^{\infty}\frac{n^{k}}{n+j}x^{n}  &  =\left(  -1\right)  ^{k}%
j^{k}\sum_{n=0}^{\infty}\frac{x^{n+1}}{n+j+1}\\
&  +\sum_{l=1}^{k}\binom{k}{l}\left(  -1\right)  ^{k-l}j^{k-l}\sum
_{n=0}^{\infty}\left(  n+j+1\right)  ^{l-1}x^{n+1}.
\end{align*}
From (\ref{gps}) and (\ref{gpp}), we have%
\[
\sum_{n=0}^{\infty}\left(  n+j+1\right)  ^{l-1}x^{n+1}=x^{-j}A\left(
l-1;x\right)  -x^{-j}A^{\left(  j\right)  }\left(  l-1;x\right)  .
\]
Moreover, it is obvious that%
\[
\sum_{n=0}^{\infty}\frac{x^{n+1}}{n+j+1}=-x^{-j}\frac{\ln\left(  1-x\right)
}{\left(  1-x\right)  }-\sum_{n=1}^{j}\frac{x^{n-j}}{n}.
\]
Thus we obtain
\begin{align*}
\sum_{n=1}^{\infty}\frac{n^{k}}{n+j}x^{n}  &  =\left(  -1\right)  ^{k+1}%
j^{k}x^{-j}\frac{\ln\left(  1-x\right)  }{\left(  1-x\right)  }+\left(
-1\right)  ^{k+1}j^{k}\sum_{n=1}^{j}\frac{x^{n-j}}{n}\\
&  +\sum_{l=1}^{k}\binom{k}{l}\left(  -1\right)  ^{k-l}j^{k-l}x^{-j}\left(
A\left(  l-1;x\right)  -A^{\left(  j\right)  }\left(  l-1;x\right)  \right)  .
\end{align*}
Using (\ref{gps}), (\ref{gpp}) and
\[
\sum_{u=0}^{p-1}x^{u}=\frac{1-x^{p}}{1-x}%
\]
we derive (\ref{11}). Thus the proof is completed.
\end{proof}

\section{Remarks}

The integral representation established in Theorem \ref{teo3} naturally gives
rise to the following open problems, which we believe deserve further investigation.

1- Although the identity  (\ref{13}) provides a closed form involving
Bernoulli numbers when $n+m~$is even. the structure of the integral in the odd
case remains unclear. Determine the behavior of the integral appearing in the
formula (\ref{13}) in the case where $n+m~$is odd. 

2- Clarify whether the HGP exhibit a semi-orthogonality property analogous to
that of GP. More precisely, one may ask whether the integral%
\[%
{\displaystyle\int\limits_{0}^{1}}
\mu\left(  x\right)  \text{ }_{H}{w}_{n}(-x)\text{ }_{H}{w}_{m}(-x)dx
\]
admits a closed expression in terms of Bernoulli numbers or related
number-theoretic sequences.

\end{document}